\newcommand{\bsx}{\boldsymbol{x}}%
\newcommand{\bsy}{\boldsymbol{y}}%
\newcommand{\bsz}{\boldsymbol{z}}%
\newcommand{\measure}[1]{\lambda_{s}(#1)}
\newcommand{\NN}{\mathbb N}
\newcommand{\ZZ}{\mathbb Z}
\newcommand{\RR}{\mathbb R}
\newcommand{\QQ}{\mathbb Q}
\renewcommand{\(}{\left (}
  \renewcommand{\)}{\right )}
\newcommand{\digit}[2]{a_{#1}(#2)}
\newtheorem{theorem}{Theorem}
\newtheorem{lemma}{Lemma}
\newtheorem{prop}{Proposition}
\begin{document}

\title{A metric result for special subsequences of the Halton sequences}
\author       {Roswitha Hofer\thanks{supported by the Austrian Science Fund (FWF):
Project F5505-N26, which is a part of the Special Research Program
``Quasi-Monte Carlo Methods: Theory and Applications''} }

\maketitle

\begin{abstract}
In this paper we investigate the special subsequence of the Halton sequence indexed by $\lfloor n\beta\rfloor$ with $\beta\in\RR$ and prove a metric almost low-discrepancy result. 
\end{abstract}


\noindent{KEYWORDS:} Halton sequences, Subsequences, Discrepancy. \\
\noindent{MSC2010:} 11K31, 11K38.

\section{Introduction}

The \emph{discrepancy} of the first $N$ terms of any sequence 
$(\bsz_n)_{n \ge 0}$ of points in $[0,1)^s$ is defined by
\begin{equation*}
  D_N(\bsz_n)=\sup_{J}\left |\frac{A_N(J)}{N}-\measure{J} \right |,
\end{equation*}
where the supremum is extended over all half-open subintervals $J$ of $[0,1)^s$, 
$\lambda_s$ denotes the $s$-dimensional Lebesgue measure,
and the counting function $A_N(J)$ is given by
\begin{equation*}
  A_N(J)=\#\{0\le n\le N-1\ :\ \bsz_n\in J\}.
\end{equation*}
For the sake of simplicity, we will sometimes write $D_N$ instead of $D_N(\bsz_n)$. If the supremum is extended over all half-open subintervals $J$ of $[0,1)^s$ with the lower left point in the origin then we arrive at the notion of the \emph{star-discrepancy} $D_N^*$. It is not so hard to see that $D_N^*\leq D_N\leq 2^s D_N^*$. A sequence $(\bsz_n)_{n \ge 0}$ of points in $[0,1)^s$ is called uniformly distributed if $\lim_{N\to\infty}D_N=0$. 

It is frequently conjectured in the theory of irregularities of distribution, that for every sequence $(\bsz_n)_{n\geq 0}$ in $[0,1)^s$ we have 
$$D_N\geq c_s\frac{\log^sN}{N}$$
for a constant $c_s>0$ and for infinitely many $N$. Therefore sequences whose discrepancy satisfies $D_N\leq C \log^sN/N$ for all $N$ with a constant $C>0$ that is independent of $N$ (or $D_N=O( \log^sN/N$)), are called \emph{low-discrepancy sequences}. Well-known examples of low-discrepancy sequences are, for example, the $s$-dimensional Halton sequences and one-dimensional Kronecker sequences $(\{n\alpha\})_{n\geq 0}$ with $\alpha$ irrational and having bounded continued fraction coefficients. 

For an integer $b\ge 2$, let $\ZZ_b=\{0,1,\ldots,b-1\}$ denote the least residue
system modulo $b$. Let $n=\sum_{j=1}^{\infty}\digit{j}{n}b^{j-1}$ with all
$\digit{j}{n}\in\ZZ_b$ and $\digit{j}{n}=0$ for all sufficiently large
$j$ be the unique digit expansion of the integer $n\ge 0$ in base $b$. 
The \textit{radical-inverse function $\phi_b$} in base $b$ is defined by 
\begin{equation*}
  \phi_b(n)=\sum_{j=1}^{\infty}\digit{j}{n}b^{-j}.
\end{equation*}
The \textit{Halton sequence} $(\bsy_n)_{n\geq 0}$
(in the bases $b_1,\ldots,b_s$) is given by 
\begin{equation*}
  \bsy_n=(  \phi_{b_1}(n),\ldots,  \phi_{b_s}(n))\in [0,1)^s,\quad n=0,1,2,\ldots .
\end{equation*}

It is well known (see~\cite[Theorem 3.6]{Niederreiter92}) that the discrepancy of the Halton sequence in pairwise coprime 
bases $b_1,\ldots,b_s \ge 2$ satisfies a low-discrepancy bound, i.e., 
\begin{equation} \label{Equ:Halton}
D_N(\bsy_n)=O_{b_1,\ldots,b_s}\({(\log\, N)^s}/{N}\) \qquad \mbox{for all } N \ge 2. 
\end{equation}

The one-dimensional Kronecker sequences $\left((\{n\alpha\})\right)_{n\geq 0}$ related to the real number $\alpha$ is uniformly distributed if $\alpha$ is irrational. This sequence is a low-discrepancy sequence if the continued fraction coefficients of $\alpha$ are bounded. Furthermore, the discrepancy $D_N$ satisfies $ND_N\leq C(\alpha,\epsilon) \log^{1+\epsilon} N$ for all $N$ and all $\epsilon>0$ for almost all $\alpha\in\RR$ in the sense of Lebesgue measure. Here the positive constant $C(\alpha,\epsilon)$ may depend on $\alpha$ and $\epsilon>0$ but is independent of $N$. 

Subsequences of the one-dimensional Kronecker sequence and Halton sequences are frequently studied objects (see for instance
\cite{AHL}, \cite{HellNied11}, \cite{HoRa}, \cite{HKLP}). Here we mention a result of Hofer and Ramare \cite{HoRa} who studied the sequence $(\lfloor n\alpha\rfloor \beta)_{n\geq 0}$. It is known to be u.d. mod
$1$ when $\alpha$ is a nonzero rational real if and only if $\beta$ is
irrational. When $\alpha=0$ the sequence is for no real $\beta$ u.d. mod
$1$. And when $\alpha$ is irrational the uniform distribution modulo one of
$([n\alpha]\beta)_{n\geq 0}$ is equivalent to the condition,
$1,\alpha,\alpha\beta$ being linearly independent over the rationals. (For a
proof see \cite[Chapter 5, Theorem 1.8]{Kuipers-Niederreiter*74}.) In \cite{HoRa} many results on the discrepancy of this sequence were proved. Here we mention just one result, i.e.,  for almost all pairs of real numbers $(\alpha,\beta)$ in the sense of Lebesgue
  measure we have for every $\epsilon>0$ that
  $D_N([n\alpha]\beta)\ll_{\alpha,\beta,\epsilon} N^{-1+\epsilon}$. 

In this paper we investigate the subsequence $(\bsx_n)_{n\geq 0}$ of the Halton sequence of this particular form
\begin{equation}\label{eq:defSequence}
\bsx_n:=(  \phi_{b_1}(\lfloor n\beta \rfloor),\ldots,  \phi_{b_s}(\lfloor n\beta \rfloor))\in [0,1)^s,\quad n=0,1,2,\ldots \end{equation}
with nonzero $\beta\in\RR$ and integers $b_1,\ldots,b_s\geq 2$ pairwise coprime. See Figure \ref{fig:1} for examples.  

\begin{figure}[t]\label{fig:1}
\caption{The first 500 points in bases $2$ and $3$ for $\beta=1,\sqrt{2},\pi$ and $e$.}
\includegraphics[width=0.5\textwidth]{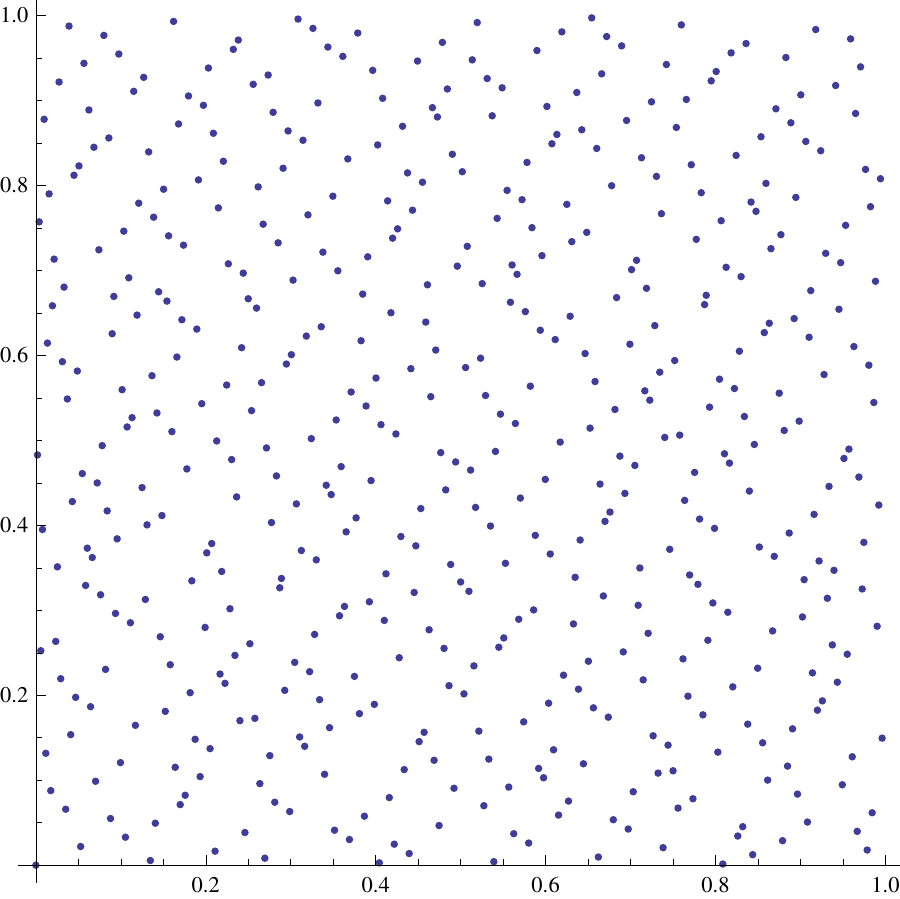}
\includegraphics[width=0.5\textwidth]{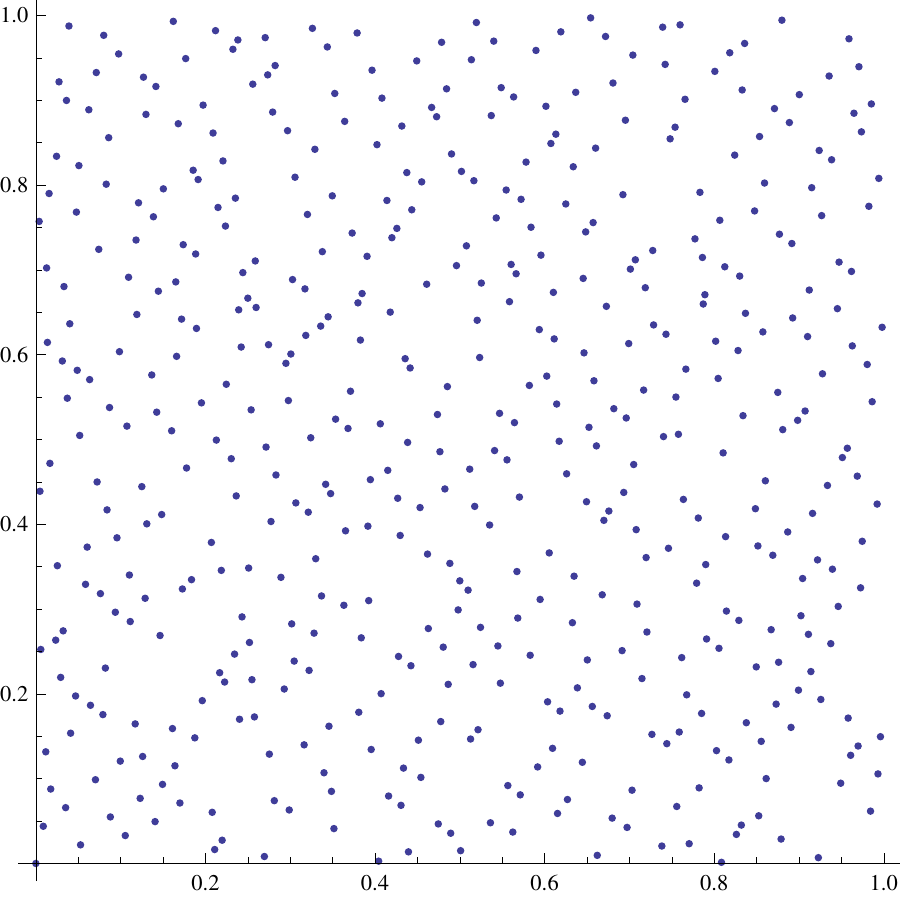}\\
\includegraphics[width=0.5\textwidth]{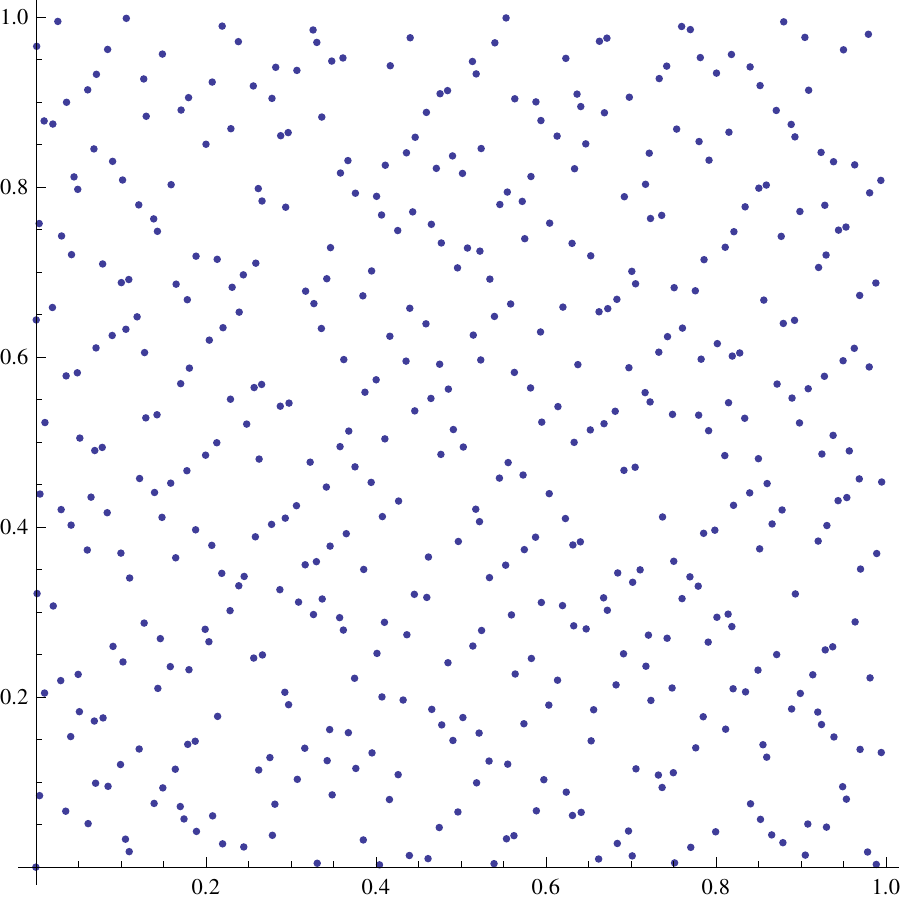}
\includegraphics[width=0.5\textwidth]{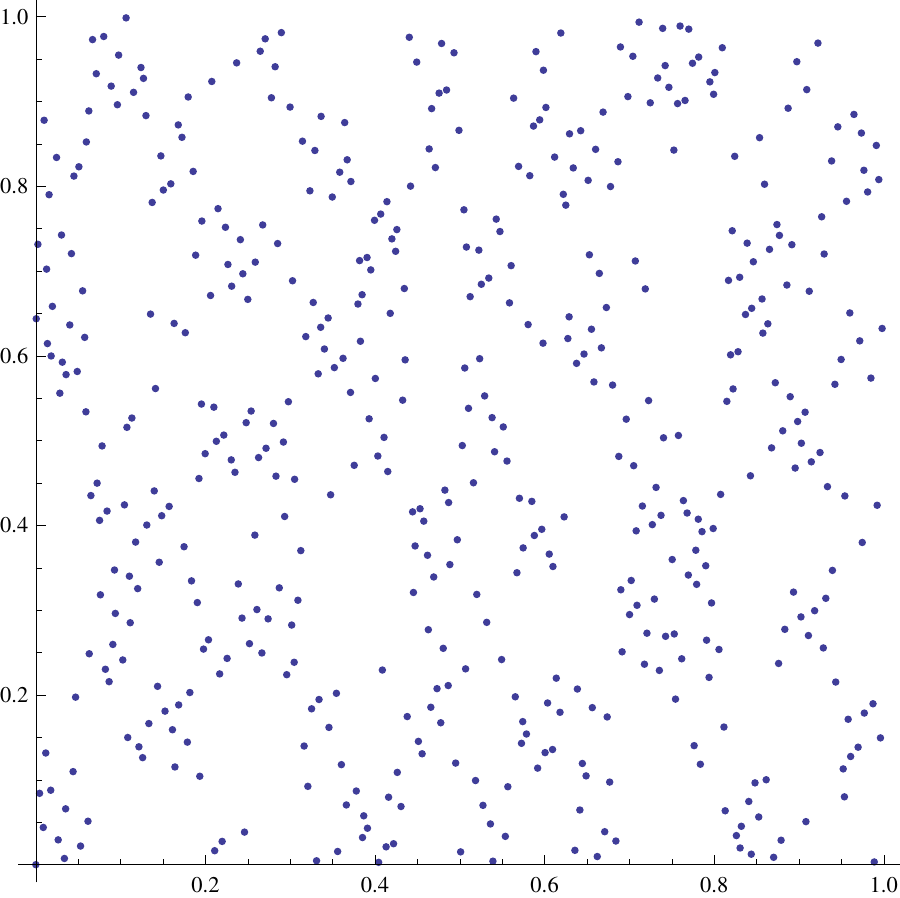}\\
\end{figure}

In \cite[Example~5.3]{HellNied11} the uniform distribution of \eqref{eq:defSequence} was studied. A sufficient criterion for the one-dimensional sequence is, for example, the linear independence of $1,\beta$ over $\QQ$. This criterion is not a necessary one, as the choice $\beta=1/d$ with $d\in\NN$ also yields a uniformly distributed sequence \eqref{eq:defSequence}. If $\beta=d\in\NN$ such that $(d,b_i)=1$ for $i=1,\ldots,s$ then the sequence \eqref{eq:defSequence} is even a low-discrepancy sequence (see \cite{HKLP}). \\

In this paper we prove that the sequence \eqref{eq:defSequence} is for almost all $\beta\in\RR$ an almost low-discrepancy sequence by the following theorem. 

\begin{theorem}\label{thm:1}
For almost all $\beta\in\RR$ in the sense of Lebesgue measure the discrepancy of $(\bsx_n)_{n\geq 0}$ satisfies for all $\epsilon>0$
$$ND_N= O_{\beta,\epsilon,b_1,\ldots,b_s,s}(\log^{s+1+\epsilon} {N}).$$
\end{theorem}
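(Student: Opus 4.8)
The plan is to reduce the discrepancy of $(\bsx_n)_{n\ge 0}$ to that of the full Halton sequence $(\bsy_m)_{m\ge 0}$ by carefully tracking how the indices $m=\lfloor n\beta\rfloor$ are distributed, and then to exploit a metric equidistribution statement for $(\{n\beta\})_{n\ge 0}$ so as to gain only an extra logarithmic factor. First I would split into the cases $|\beta|<1$ and $|\beta|\ge 1$: when $\beta$ is the reciprocal of an integer the sequence is essentially the Halton sequence with bounded-multiplicity repetitions, so by standard arguments one may assume $|\beta|\ge 1$ (and by symmetry $\beta>0$). For such $\beta$, the map $n\mapsto \lfloor n\beta\rfloor$ is a strictly increasing injection, so $\{\bsx_n : 0\le n<N\} = \{\bsy_m : m\in S_N\}$ where $S_N = \{\lfloor n\beta\rfloor : 0\le n<N\}$ is a subset of $\{0,1,\ldots,M-1\}$ with $M=\lfloor (N-1)\beta\rfloor+1 \asymp_\beta N$ and $|S_N| = N$.

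The key identity is then, for any box $J$,
\begin{equation*}
A_N(J;\bsx) = \#\{m\in S_N : \bsy_m\in J\} = \sum_{m=0}^{M-1}\mathbf{1}_{S_N}(m)\mathbf{1}_J(\bsy_m).
\end{equation*}
Writing $\mathbf{1}_{S_N}(m) = \lfloor (m+1)/\beta\rfloor - \lfloor m/\beta\rfloor = 1/\beta + (\{m/\beta\} - \{(m+1)/\beta\})$, one separates the ``main term'' $\frac{1}{\beta}\sum_{m<M}\mathbf 1_J(\bsy_m) = \frac{M}{\beta}\big(\measure J + O(D_M(\bsy))\big)$, which by \eqref{Equ:Halton} is $N\measure J + O_{\beta,b_1,\ldots,b_s}(\log^s N)$, from the ``error term'' $E_N(J) := \sum_{m<M}\big(\{m/\beta\}-\{(m+1)/\beta\}\big)\mathbf 1_J(\bsy_m)$. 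Thus $N D_N(\bsx) \le O_{\beta,b_1,\ldots,b_s}(\log^s N) + \sup_J |E_N(J)|$, and the whole problem reduces to bounding $\sup_J|E_N(J)|$ by $O_{\beta,\epsilon,\ldots}(\log^{s+1+\epsilon}N)$ for almost all $\beta$.

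To bound $E_N(J)$, I would Abel-summate in $m$: since $\{m/\beta\}-\{(m+1)/\beta\} = -\,\Delta g(m)$ with $g(m)=\{m/\beta\}$ (up to integer jumps that contribute $O(M/\beta^{-1})$... in fact $g$ drops by $1$ roughly $M/\beta$ times, and these jumps reassemble into the already-extracted main term), the sum $E_N(J)$ becomes a combination of partial sums $T(K;J):=\sum_{m\le K}\mathbf 1_J(\bsy_m)$ weighted by increments of $g$. Controlling $\sup_K |T(K;J) - (K+1)\measure J|$ uniformly in $J$ is again \eqref{Equ:Halton}, giving $O(\log^s M)$; the remaining input is that $\sum_{m<M}\big|\{m/\beta\}-\{(m+1)/\beta\}\big|$, i.e. the total variation of the sawtooth $\{\cdot/\beta\}$ along the integers, has to be under control — and this is exactly where a metric assumption on $1/\beta$ enters. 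For almost every $\beta$, $\alpha:=1/\beta$ has continued-fraction denominators $q_k$ growing fast enough that the three-distance structure of $(\{m\alpha\})_{m<M}$ yields $\sum_{m<M}\|{\cdot}\| $-type variation bounds of size $O_{\alpha,\epsilon}(\log^{1+\epsilon}M)$ beyond the trivial linear part; combining this with the uniform $O(\log^s)$ discrepancy bound for the Halton points, summation by parts produces the claimed $\log^{s+1+\epsilon}$.

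The main obstacle I anticipate is making the last step fully rigorous and \emph{uniform in the box $J$}: one needs a bound on $\sup_J |E_N(J)|$, not just for a fixed $J$, and the naive approach of union-bounding over a net of boxes loses too much. The clean way around this is to reorganize $E_N(J)$ as a Stieltjes integral $\int_0^M \big(A_{\lfloor t\rfloor}(J;\bsy) - \text{main}\big)\, d\psi(t)$ where $\psi$ encodes the fluctuations of $\{t/\beta\}$, so that the supremum over $J$ is governed by the single quantity $\sup_{K,J}|A_K(J;\bsy)-(K+1)\measure J| = O(\log^s M)$ times the total variation of $\psi$, and then to invoke the metric estimate: for almost all $\beta$ and every $\epsilon>0$, the relevant variation is $O_{\beta,\epsilon}(\log^{1+\epsilon} M)$. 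This metric estimate is the analytic heart; it is a Borel–Cantelli argument over the continued fraction expansion of $1/\beta$, analogous to the classical fact that $N D_N(\{n\alpha\}) \ll_{\alpha,\epsilon} \log^{1+\epsilon}N$ for a.e. $\alpha$, which is quoted in the introduction. Assembling these pieces gives $N D_N(\bsx) = O_{\beta,\epsilon,b_1,\ldots,b_s,s}(\log^s N\cdot\log^{1+\epsilon}N) = O_{\beta,\epsilon,b_1,\ldots,b_s,s}(\log^{s+1+\epsilon}N)$, as required.
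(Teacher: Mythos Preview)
Your approach has a genuine gap at the Abel-summation step. After isolating
\[
E_N(J)=\sum_{m<M}\bigl(\{m/\beta\}-\{(m+1)/\beta\}\bigr)\,\mathbf 1_J(\bsy_m),
\]
you propose to bound it by $\sup_K\bigl|T(K;J)-(K{+}1)\lambda_s(J)\bigr|$ times the total variation of $g(m)=\{m/\beta\}$. But with $\alpha=1/\beta\in(0,1)$ the increments $g(m{+}1)-g(m)$ take only the two values $\alpha$ and $\alpha-1$, and the number of ``jumps'' among $m<M$ is exactly $\lfloor M\alpha\rfloor$; hence
\[
\sum_{m<M}\bigl|g(m{+}1)-g(m)\bigr|=M\alpha+\lfloor M\alpha\rfloor(1-2\alpha)\ \asymp\ M
\]
for every irrational $\alpha$, independently of any Diophantine condition. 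The metric input can at best control the $O(\log^{1+\epsilon}M)$ \emph{error} in the jump count, not the linear main term. The same obstruction hits the second differences $g(m)-2g(m{+}1)+g(m{+}2)\in\{-1,0,1\}$, which are nonzero on a set of positive density, so either direction of Abel summation yields only $|E_N(J)|=O(M\log^s M)$. The underlying difficulty is that you are trying to bound a genuine \emph{correlation} between $\mathbf 1_{S_N}(m)$ and $\mathbf 1_J(\bsy_m)$ using only the separate discrepancies of the Beatty set and of the Halton sequence; without extra structure this cannot work.

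The paper exploits precisely that extra structure. It first treats $\beta\in(0,1)$ (writing $\alpha=1/\beta>1$) and decomposes each anchored box into $O(\log^s N)$ elementary $b$-adic intervals. For such an interval the condition $\bsy_{\lfloor n/\alpha\rfloor}\in I_e$ is a congruence $\lfloor n/\alpha\rfloor\equiv R\pmod{Q}$ with $Q=b_1^{j_1}\cdots b_s^{j_s}$, which is \emph{exactly} the event $\{n/(\alpha Q)\}\in[R/Q,(R{+}1)/Q)$. Thus the discrepancy of $(\bsx_n)$ is bounded by a sum of discrepancies of the whole family of Kronecker sequences $(\{n/(\alpha b_1^{j_1}\cdots b_s^{j_s})\})_n$ for $0\le j_i\le f_i\asymp\log N$ (Proposition~\ref{prop:1}), each of which is controlled by continued-fraction coefficients of $\alpha b_1^{j_1}\cdots b_s^{j_s}$ (Lemma~\ref{lem:1}). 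The metric heart is then Lemma~\ref{lem:2} (Larcher): for a.e.\ $\alpha$ the triple sum $\sum_{j_1,\ldots,j_s\le L}\sum_{k\le L}a_k(\alpha b_1^{j_1}\cdots b_s^{j_s})$ is $O(L^{s+1+\epsilon})$. This simultaneous control over \emph{many} rescalings $\alpha b_1^{j_1}\cdots b_s^{j_s}$ is exactly what a single-sequence Abel argument cannot supply. Finally, the case $\beta>1$ is reduced to $\beta\in(0,1)$ via the complementary Beatty identity with $\alpha=\beta/(\beta+1)$, not by the remark about reciprocals of integers, which covers only a null set.
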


A proof of this theorem will be outsourced in the next section. 


\section{Proof of Theorem~\ref{thm:1}}

We start with the case $\beta\in(0,1)$. We write $\alpha:=1/\beta$. Note that $\alpha>1$. Then Theorem~\ref{thm:1} immediately follows from the following three auxiliary results. 

\begin{prop}  \label{prop:1} The star-discrepancy of the first $N$ points of the sequence \eqref{eq:defSequence} satisfies 

  \begin{align*}
ND^*_N(\bsx_n)\leq b_1\ldots b_s \sum_{j_1=0}^{f_1}\cdots \sum_{j_s=0}^{f_s}N\Delta^{(j_1,\ldots,j_s)}_N(\{n/(\alpha b_1^{j_1}\cdots b_s^{j_s})\})+s,
\end{align*}
where 
  \begin{equation} \label{Equ:deffi}
    f_i:=\left\lceil \frac{1}{\log b_i}\log\, N\right\rceil
    \quad\text{ for } 1\leq i\leq s
  \end{equation}
  and $$N\Delta^{(j_1,\ldots,j_s)}_N\left(\left\{\frac{n}{\alpha b_1^{j_1}\cdots b_s^{j_s}}\right\}\right):=\sup_{0\leq R< b_1^{j_1}\cdots b_s^{j_s}}\left|A_N\left(\left[\frac{R}{ b_1^{j_1}\cdots b_s^{j_s}},\frac{R+1}{ b_1^{j_1}\cdots b_s^{j_s}}\right)\right)-\frac{N}{ b_1^{j_1}\cdots b_s^{j_s}}\right|.$$
\end{prop}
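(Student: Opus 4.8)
The plan is to reduce the star-discrepancy of $(\bsx_n)$ in $[0,1)^s$ to counting the integers $\lfloor n\beta\rfloor$ in certain residue classes, and then to rewrite each such count as a one-dimensional discrepancy of a Kronecker-type sequence $\{n/(\alpha b_1^{j_1}\cdots b_s^{j_s})\}$. First I would fix an arbitrary anchored box $J=\prod_{i=1}^s[0,\gamma_i)\subseteq[0,1)^s$ and approximate each $\gamma_i$ from above and below by the $b_i$-adic rationals $c_i/b_i^{f_i}$ with $f_i$ as in \eqref{Equ:deffi}; since $b_i^{-f_i}\le 1/N$, replacing $\gamma_i$ by the nearest $b_i$-adic rational changes the count $A_N$ by at most $1$ per coordinate, which accounts for the additive $+s$. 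So it suffices to bound $|A_N(\prod_i[0,c_i/b_i^{f_i}))-N\prod_i c_i/b_i^{f_i}|$.

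Next I would use the standard fact that the radical-inverse function $\phi_{b_i}$ sends the event $\phi_{b_i}(m)\in[0,c_i/b_i^{f_i})$ to a condition on the residue of $m$ modulo $b_i^{f_i}$: the set of $m$ with $\phi_{b_i}(m)\in[0,c_i/b_i^{f_i})$ is a union of $c_i$ residue classes modulo $b_i^{f_i}$ (those classes $r$ with $\phi_{b_i}(r)\,b_i^{f_i}<c_i$, i.e. whose first $f_i$ base-$b_i$ digits form a number $<c_i$ after digit reversal). Because $b_1,\dots,b_s$ are pairwise coprime, the Chinese Remainder Theorem lets me combine these into the statement that $\bsx_n\in J$ (after the $b$-adic approximation) is equivalent to $\lfloor n\beta\rfloor$ lying in a union of $c_1\cdots c_s$ residue classes modulo $M:=b_1^{f_1}\cdots b_s^{f_s}$. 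Each single residue class $\lfloor n\beta\rfloor\equiv R\pmod M$ is the event $\lfloor n\beta\rfloor/M\in[\,R/M,(R+1)/M\,)\bmod 1$, and since $\lfloor n\beta\rfloor=\lfloor n/\alpha\rfloor$ one checks that $n\mapsto\lfloor n/\alpha\rfloor\bmod M$ counts the same points as $n\mapsto\{n/(\alpha M)\}$ landing in $[R/M,(R+1)/M)$ — this is exactly the quantity bounded by $N\Delta_N^{(f_1,\dots,f_s)}(\{n/(\alpha b_1^{f_1}\cdots b_s^{f_s})\})$, up to the choice of modulus.

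The remaining point is bookkeeping of the moduli: rather than working only with the top-level modulus $M=b_1^{f_1}\cdots b_s^{f_s}$, I would expand $A_N$ of the anchored $b$-adic box as a telescoping/digit-by-digit sum so that the count of each reversed-digit cylinder set of "depth" $(j_1,\dots,j_s)$ with $0\le j_i\le f_i$ contributes an error controlled by $N\Delta_N^{(j_1,\dots,j_s)}$; summing the $b_1\cdots b_s$-many digit choices at each level and then over all levels $0\le j_i\le f_i$ produces the stated double (multiple) sum, with the prefactor $b_1\cdots b_s$ absorbing the per-level digit count. Concretely, $c_i/b_i^{f_i}=\sum_{j_i=1}^{f_i}d_{i,j_i}b_i^{-j_i}$ with digits $d_{i,j_i}\in\ZZ_{b_i}$, and expanding the product $\prod_i c_i/b_i^{f_i}$ and the corresponding $A_N$ along these digits yields a sum of terms each of which is a count of a single residue class modulo $b_1^{j_1}\cdots b_s^{j_s}$, estimated by the corresponding $\Delta_N$; the number of such terms at multi-level $(j_1,\dots,j_s)$ is at most $b_1\cdots b_s$ (one digit choice per coordinate), giving the prefactor.

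The main obstacle I anticipate is the combinatorial identity in the last step: cleanly writing $A_N(\prod_i[0,c_i/b_i^{f_i}))$ as a signed/unsigned sum of residue-class counts indexed by $(j_1,\dots,j_s)$ so that (a) each term is a genuine single-interval count matching the definition of $N\Delta_N^{(j_1,\dots,j_s)}$, (b) the multiplicative constant is exactly $b_1\cdots b_s$ rather than something larger, and (c) the switch from $\lfloor n\beta\rfloor$ to $\{n/(\alpha b_1^{j_1}\cdots b_s^{j_s})\}$ is accounted for correctly including the floor. Everything else — the $b$-adic approximation costing $+s$, the CRT recombination, and the passage from radical-inverse conditions to residue classes — is routine.
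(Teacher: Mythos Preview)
Your overall strategy---digit expansion of the anchored box, the equivalence between $\phi_{b_i}(\lfloor n\beta\rfloor)$ lying in a $b_i$-adic cylinder and a residue condition on $\lfloor n\beta\rfloor$, the CRT combination, and the identification with the Kronecker fractional part $\{n/(\alpha b_1^{j_1}\cdots b_s^{j_s})\}$---is exactly the paper's approach, and the ``combinatorial identity'' you flag as the main obstacle is in fact routine (it is just the base-$b_i$ digit expansion of $c_i$, producing at most $(b_i-1)$ terms per level, hence the prefactor $b_1\cdots b_s$).

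The genuine gap is in your approximation step. You write that since $b_i^{-f_i}\le 1/N$, ``replacing $\gamma_i$ by the nearest $b_i$-adic rational changes the count $A_N$ by at most $1$ per coordinate, which accounts for the additive $+s$.'' This is false: the bound $b_i^{-f_i}\le 1/N$ controls the \emph{volume} of the thin slab (so $N\lambda_s(I\setminus J)\le s$), but it says nothing about the \emph{count} $A(I\setminus J,N)$. Since $\beta\in(0,1)$, consecutive values of $\lfloor n\beta\rfloor$ repeat, and a single elementary interval of volume $\le 1/N$ can easily contain on the order of $\alpha=1/\beta$ points of the sequence (or more). The paper handles this correctly: it covers $I\setminus J$ by a set $S$ that is a union of $s$ elementary intervals of depth $(0,\ldots,0,f_k,0,\ldots,0)$, and bounds $A(S,N)\le |A(S,N)-N\lambda_s(S)|+N\lambda_s(S)$; the second term contributes the additive $s$, while the first term is bounded by $\sum_{k=1}^s N\Delta_N^{(0,\ldots,0,f_k,0,\ldots,0)}$ and is \emph{absorbed into the main sum}. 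This is precisely why the sum in the proposition runs from $j_i=0$ rather than $j_i=1$: your digit expansion $c_i/b_i^{f_i}=\sum_{j_i=1}^{f_i}d_{i,j_i}b_i^{-j_i}$ only produces indices $j_i\ge 1$, and the terms with some $j_i=0$ are there exactly to pay for the slab counts you tried to dismiss as $+s$.
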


\begin{proof}
  The first aim in the proof is to compute or
  estimate the function $A_N(J)-N\lambda_s(J)$ relative to the points $\bsx_n$ in~\eqref{eq:defSequence}, 
where $J\subseteq [0,1)^{s}$ is an interval of the form
\begin{equation} \label{Equ:formJ}
  J=\prod_{i=1}^s[0,v_ib_i^{-f_i})
\end{equation}
 with $v_1,\ldots,v_s \in\ZZ,\,
  1\leq v_i< b_i^{f_i}$ for $1\leq i\leq s$. We extend $v_i$ in base $b_i$, i.e., $$v_i=u^{(i)}_{f_i}+u^{(i)}_{f_i-1}b_i+\cdots+u^{(i)}_{1}b_i^{f_i-1}.$$
We can write $J$ as a union of $\prod_{i=1}^s(\sum_{j=1}^{f_i}u^{(i)}_{j})\leq (b_1-1) \cdots (b_s-1) f_1 \cdots f_s$ elementary intervals of the form 
$$\prod_{i=1}^s\left(\bigcup_{j=1}^{f_i}\bigcup_{k=1}^{u^{(i)}_{j}}\left[\sum_{l=1}^{j-1}\frac{u^{(i)}_l}{b_i^l} +\frac{k-1}{b_i^j}, \sum_{l=1}^{j-1}\frac{u^{(i)}_l}{b_i^l}+ \frac{k}{b_i^j}\right)\right)$$

  The crucial step is to estimate $A(I_e,N)-N\lambda_s(I_e)$ for an elementary interval $I_e$ by exploiting special properties of the Halton sequence. Using the definition of the sequence we obtain 
 for any integer $n\ge 0$ that the following three assertions are equivalent. 
 \begin{itemize}
 \item[-]  \begin{equation*}
    (\phi_{b_1}(\lfloor n/\alpha\rfloor),\ldots,\phi_{b_s}(\lfloor n/\alpha \rfloor))\in\prod_{i=1}^s\left[\sum_{l=1}^{j_i-1}\frac{u^{(i)}_l}{b_i^l} +\frac{k_i-1}{b_i^{j_i}}, \sum_{l=1}^{j_1-1}\frac{u^{(i)}_l}{b_i^l}+ \frac{k_i}{b_i^{j_i}}\right),
  \end{equation*}    
  where $1\leq j_i\leq f_i$ and $1\leq k_i\leq u^{(i)}_{j_i}$ for $i=1,\ldots,s$.
\item[-]
  \begin{equation} \label{equ:system}
  \lfloor n/\alpha\rfloor \equiv a_1 \pmod{b_1^{j_1}}, \ldots,  \lfloor n/\alpha\rfloor \equiv a_s \pmod{b_1^{j_s}}
    \end{equation}
    with $a_i=(k_i-1)b_i^{j_i-1}+\sum_{l=1}^{j_i-1}u_l^{(i)}b_i^{l-1}$. 
\item[-]  $$\lfloor n/\alpha\rfloor \equiv R \pmod{b_1^{j_1} \cdots b_s^{j_s}}$$
where the $R$ is uniquely determined by the $a_1,\ldots,a_s$. 
\end{itemize} 

By observing the fact that 
$$\lfloor n/\alpha\rfloor \equiv \lfloor n/\alpha-b_1^{j_1}\cdots b_s^{j_s}\lfloor n/(\alpha b_1^{j_1}\cdots b_s^{j_s})\rfloor \rfloor\equiv R \pmod{b_1^{j_1}\cdots b_s^{j_s}}$$
we easily obtain the equivalence of the above assertion to 
 \begin{equation*}
    \left\{\frac{n}{\alpha b_1^{j_1}\cdots b_s^{j_s}}\right\}\in\left[\frac{R}{b_1^{j_1}\cdots b_s^{j_s}},\frac{R+1}{b_1^{j_1}\cdots b_s^{j_s}}\right).
  \end{equation*}

Hence,

\begin{align*}
\left|A(J,N)-N\lambda_s(J)\right|\leq (b_1-1)\ldots (b_s-1) \sum_{j_1=1}^{f_1}\cdots \sum_{j_s=1}^{f_s}N\Delta^{(j_1,\ldots,j_s)}_N(\{n/(\alpha b_1^{j_1}\cdots b_s^{j_s})\})
\end{align*}

  An arbitrary interval $I\subseteq [0,1)^{s}$ of the form 
\begin{equation} \label{Equ:formI}
I= \prod_{i=1}^s[0,w_i)
\end{equation}
 with
  $0<w_i\leq 1$ for $1\leq i\leq s$ can be approximated from below
  by an interval $J$ of the form~\eqref{Equ:formJ},
  by taking the nearest fraction to the left of $w_i$ of the form 
$v_ib_i^{-f_i}$ with $v_i\in\ZZ$. We easily get
  \begin{equation*}
    \left|A(I,N) -N\lambda_{s}(I)\right|\leq 
    \left|A(J,N) -N\lambda_{s}(J)\right|+\max(A(I\setminus J,N), N\lambda_{s}(I\setminus J)).
  \end{equation*}
  The definition of the $f_i$ in~\eqref{Equ:deffi} yields $f_i\geq \log_{b_i}N$. Hence $N\lambda_{s}(I\setminus J)\leq N \sum_{i=1}^sb_i^{f_i}\leq s$. 
  It remains to estimate $A(I\setminus J,N)$: 
  We trivially have $A(I\setminus J,N)\leq A(S,N)\leq |A(S,N)-N\lambda_s(S)|+N\lambda_s(S)$ with $S$ a union of elementary intervals $I_e$ of the form 
\begin{align*}  S=&\bigcup_{k=1}^s\prod_{{i=1}}^{k-1}\left[0,1\right)   \times \left[\sum_{l=1}^{f_k-1}\frac{u^{(i)}_l}{b_i^l} +\frac{u_{f_k}^{(i)}}{b_i^{f_i}}, \sum_{l=1}^{f_k-1}\frac{u^{(i)}_l}{b_i^l}+ \frac{u_{f_k}^{(i)}+1}{b_i^{f_i}}\right) \times \prod_{{i=k+1}}^{s}\left[0,1\right).
\end{align*}
 Now $N\lambda_s(I_e)\leq 1$ by the definition of $f_k$ and $$|A(I_e,N)-N\lambda_s(I_e)|\leq N\Delta^{(0,\ldots,0,f_k,0,\ldots,0)}_N(\{n/(\alpha  b_k^{f_k})\})$$ and the result follows.  
  
\end{proof}

The discrepancy of a one-dimensional Kronecker sequence $(\{n\beta\})_{n\geq 0}$ can be related to the continued fraction expansion of $\beta$ 

$$\beta=[\lfloor \beta\rfloor;a_1(\beta),a_2(\beta),a_3\beta,\ldots ]=\lfloor \beta\rfloor+ \frac{1}{a_1(\beta)+\frac{1}{a_2(\beta)+\frac{1}{a_3(\beta)+\frac{1}{\ddots}}}}$$

This expansion is obtained by setting $\{\beta\}=x_1$ and defining inductively 
$$a_k(\beta)=\lfloor 1/x_k\rfloor \mbox{ and }x_{k+1}=\{1/x_k\}.$$ 

The convergents $p_k/q_k$ to $\beta$ are defined by $p_k/q_k=[\lfloor \beta\rfloor;a_1(\beta),a_2(\beta),\ldots,a_k(\beta) ]$. 
The denominators $q_k$ obey the following recursion 
$$q_{k}=a_k(\beta)q_{k-1}+q_{k-2}\mbox{ for $k\geq 1$ and with $q_0=1$, $q_{-1}=0$.}$$

\begin{lemma}\label{lem:1}
We have 
$$N\Delta^{(j_1,\ldots,j_s)}_N(\{n/(\alpha b_1^{j_1}\cdots b_s^{j_s})\})\leq \alpha+1+ 2\left( \sum_{k=1}^{\lceil\log_{3/2}(N)\rceil}a_k((\alpha b_1^{j_1}\cdots b_s^{j_s}))\right). $$
\end{lemma}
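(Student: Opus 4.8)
The plan is to read $N\Delta^{(j_1,\dots,j_s)}_N$ as a ``grid'' discrepancy of the one‑dimensional Kronecker sequence $(\{n/\gamma\})_{n\ge0}$, where I abbreviate $\gamma:=\alpha b_1^{j_1}\cdots b_s^{j_s}$ and $Q:=b_1^{j_1}\cdots b_s^{j_s}$ (so $\gamma=\alpha Q>1$, and $\gamma$ is irrational for almost all $\beta$, which is all that is needed for Theorem~\ref{thm:1}), and then to pass to the pure Kronecker sequence $(\{n\gamma\})_{n\ge0}$, whose discrepancy is governed by the partial quotients $a_k(\gamma)$. By definition I must bound $\sup_{0\le R<Q}\bigl|A_N\bigl([R/Q,(R+1)/Q)\bigr)-N/Q\bigr|$, where $A_N$ counts the indices $0\le n<N$ with $\{n/\gamma\}\in[R/Q,(R+1)/Q)$.

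First I would split the indices $n$ according to the block $j:=\lfloor n/\gamma\rfloor$. An elementary computation gives $\{n/\gamma\}\in[R/Q,(R+1)/Q)$ precisely when $n\in[\gamma j+\alpha R,\ \gamma j+\alpha(R+1))$; since $0\le\alpha R$ and $\alpha(R+1)\le\alpha Q=\gamma$ (because $R+1\le Q$), this sub‑interval has length exactly $\alpha$ and lies inside $[\gamma j,\gamma(j+1))$. With $K:=\lfloor N/\gamma\rfloor$, the blocks $j=0,\dots,K-1$ lie in $[0,N)$ while the next one contributes at most $\lceil\alpha\rceil$ further indices; counting integers in each length‑$\alpha$ sub‑interval and using the irrationality of $\gamma$, the contribution of block $j$ becomes $\alpha+\{\gamma j+\alpha R\}-\{\gamma j+\alpha(R+1)\}$. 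Matching the $K$ copies of $\alpha$ against the main part of $N/Q$ (with $N=\gamma K+\sigma$, $0\le\sigma<\gamma$), I expect to reach
$$\Bigl|A_N\bigl([\tfrac RQ,\tfrac{R+1}Q)\bigr)-\tfrac NQ\Bigr|\ \le\ (\alpha+1)+\Bigl|\sum_{j=0}^{K-1}\bigl(\{\gamma j+\alpha R\}-\{\gamma j+\alpha(R+1)\}\bigr)\Bigr|,$$
the term $\alpha+1$ absorbing the leftover partial block, the rounding in $N=\gamma K+\sigma$, and the single degenerate endpoint $j=R=0$. (If $K=0$, i.e.\ $\gamma>N$, the sum is empty and the bound $\le\alpha+1$ is immediate.)

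Next I would treat the remaining sum. Since the two fractional parts differ by the fixed shift $\alpha$, one has $\{y\}-\{y+\alpha\}=\lfloor\alpha\rfloor-\alpha+\mathbf 1_{[1-\{\alpha\},1)}(\{y\})$ for every real $y$; applied with $y=\gamma j+\alpha R$ this turns the sum into $-K\{\alpha\}$ plus the number of $j<K$ for which $\{\gamma j\}$ lies in a fixed interval (mod~$1$) of length $\{\alpha\}$. Hence, up to an additive constant, the sum is a counting function of $(\{n\gamma\})_{0\le n<K}$ over an interval, so its modulus is $O\bigl(K\,D_K(\{n\gamma\})\bigr)$. Now I would invoke the classical estimate for the discrepancy of a one‑dimensional Kronecker sequence in terms of its partial quotients (see, e.g., \cite{Kuipers-Niederreiter*74} or \cite{Niederreiter92}), which bounds $K\,D_K(\{n\gamma\})$ by a constant multiple of $\sum_{k:\,q_k\le K}a_k(\gamma)$, where $q_0=1$ and $q_k=a_k(\gamma)q_{k-1}+q_{k-2}$ are the convergent denominators of $\gamma$. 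These satisfy the Fibonacci‑type bound $q_k\ge(3/2)^{k-1}$, so that since $K<N$ only the partial quotients with index $k\le\lceil\log_{3/2}N\rceil$ can occur; absorbing all constants into the factor $2$ and taking the supremum over $R$ then yields the asserted estimate.

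The conceptually useful point is that the cell $[R/Q,(R+1)/Q)$ pulls back, block by block, to an interval of length only $\alpha$ (rather than $\alpha R$): this is what keeps both the leftover‑block error and the rounding error uniformly $O(\alpha)$ instead of $O(\gamma)$, and what makes the reduction to $(\{n\gamma\})_{n\ge0}$ possible. I expect the main nuisance to be exactly the bookkeeping of these lower‑order contributions --- the partial last block, the rounding of $N/\gamma$, and the degenerate endpoint $j=R=0$ where the count of integers in a half‑open interval jumps by one --- so that everything fits into the single term $\alpha+1$, together with checking that the normalization of the classical Kronecker bound one invokes produces exactly the stated constant $2$ in front of the sum.
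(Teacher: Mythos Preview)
Your argument is sound, but it takes a different route from the paper. The paper works directly with the sequence $(\{n/\gamma\})_{n\ge0}$ (where $\gamma=\alpha b_1^{j_1}\cdots b_s^{j_s}$) via the Ostrowski expansion $N=N_0+N_1q_1+\cdots+N_rq_r$ in terms of the convergent denominators of $1/\gamma$. Because $\gamma>1$, one has $a_1(1/\gamma)=\lfloor\gamma\rfloor$, so the first $N_0\le\lfloor\gamma\rfloor$ points are peeled off and estimated trivially: each grid cell $[R/Q,(R+1)/Q)$ receives at most $\lfloor\alpha\rfloor+1$ of them, which gives the $\alpha+1$. The remaining $N-N_0$ points are handled by the proof of Theorem~3.4 in Kuipers--Niederreiter, producing $2\sum_{k\ge2}a_k(1/\gamma)=2\sum_{k\ge1}a_k(\gamma)$. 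Your approach instead dualises: you group the indices $n$ into blocks $j=\lfloor n/\gamma\rfloor$, convert each block count into a fractional-part difference, and reduce to the ordinary discrepancy of the \emph{dual} Kronecker sequence $(\{j\gamma\})_{0\le j<K}$, to which you then apply the classical partial-quotient bound as a black box.

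Both routes land on the same partial-quotient sum, so conceptually they are equivalent; what each buys is different. The paper's Ostrowski splitting delivers the precise additive constant $\alpha+1$ and the factor $2$ with essentially no bookkeeping, since exactly one chunk of size $N_0$ is isolated. Your block decomposition is more transparent about \emph{why} the partial quotients of $\gamma$ (rather than $1/\gamma$) appear, but---as you correctly anticipate---collecting the partial last block, the rounding $\sigma/Q<\alpha$, and the endpoint $j=R=0$ naturally yields something like $\alpha+2$ (or $2\alpha+2$) rather than $\alpha+1$, and the black-box bound $K D_K(\{j\gamma\})\le C\sum a_k$ carries its own additive constant. None of this matters for Theorem~\ref{thm:1}, but if you want the lemma exactly as stated you would either have to sharpen the endgame or adopt the paper's Ostrowski split.
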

\begin{proof}

We follow the idea of \cite[page 125, proof of Theorem 3.4]{Kuipers-Niederreiter*74}. 
Now we use the fact that $\beta\in(0,1)$ and hence $\alpha>1$. 
Then $\alpha b_1^{j_1}\cdots b_s^{j_s}>1$, 
and the first coefficient $a_1(1/(\alpha b_1^{j_1}\cdots b_s^{j_s}))=\lfloor \alpha b_1^{j_1}\cdots b_s^{j_s}\rfloor$. 
We compute the Ostrowski expansion using the denominators $1=q_0=q_1<q_2<q_3<\cdots$ of the convergents to $1/(\alpha b_1^{j_1}\cdots b_s^{j_s})$: $$N=N_0+N_1q_1+N_2q_2+\cdots+N_rq_r$$
with $q_r\leq N<q_{r+1}$ and $N_i\leq a_{i+1}(1/\alpha b_1^{j_1}\cdots b_s^{j_s})$.

We split 
$$N\Delta^{(j_1,\ldots,j_s)}_N(\{\frac{n}{\alpha b_1^{j_1}\cdots b_s^{j_s}}\})\leq N_0\Delta^{(j_1,\ldots,j_s)}_{N_0}(\{\frac{ n}{\alpha b_1^{j_1}\cdots b_s^{j_s}}\})+(N-N_0)\Delta^{(j_1,\ldots,j_s)}_{N-N_0}(\{\frac{n+N_0}{\alpha b_1^{j_1}\cdots b_s^{j_s}}\})$$
and proceed for the second term as in \cite[page 125, proof of Theorem 3.4]{Kuipers-Niederreiter*74} and obtain 
\begin{align*}
(N-N_0)\Delta^{(j_1,\ldots,j_s)}_{N-N_0}(\{\frac{n+N_0}{\alpha b_1^{j_1}\cdots b_s^{j_s}}\})&\leq 2\sum_{k=2}^{\lceil\log_{3/2} N\rceil+1} a_k(1/(\alpha b_1^{j_1}\cdots b_s^{j_s}))\\
&\leq 2\sum_{k=1}^{\lceil\log_{3/2} N\rceil} a_k(\alpha b_1^{j_1}\cdots b_s^{j_s}).
\end{align*}
Finally, it is not so hard to see that $N_0\Delta^{(j_1,\ldots,j_s)}_{N_0}(\{n/(\alpha b_1^{j_1}\cdots b_s^{j_s})\})\leq \alpha+1$ with $N_0\leq \lfloor \alpha b_1^{j_1}\cdots b_s^{j_s}\rfloor$. Note the fact that $[R/b_1^{j_1}\cdots b_s^{j_s}),(R+1)/b_1^{j_1}\cdots b_s^{j_s})$ either remains empty or contains at most $\lfloor \alpha \rfloor+1$ points, and $N_01/b_1^{j_1}\cdots b_s^{j_s}\leq\lfloor \alpha \rfloor+1$.

\end{proof}


\begin{lemma}\label{lem:2}
Let $\alpha\in\RR$. For any $L\in\NN$ let 
$$S_L(\alpha):=\sum_{j_1=0}^L\cdots \sum_{j_s=0}^L\sum_{k=1}^La_k(\alpha b_1^{j_1}\cdots b_s^{j_s}).$$
Then for almost all $\alpha\in\RR$ we have for every $\epsilon>0$ 
$$S_L(\alpha)=O_{\alpha,\epsilon,b_1,\ldots,b_s,s}(L^{s+1+\epsilon}).$$
\end{lemma}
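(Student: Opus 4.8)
The plan is a first-moment argument combined with the Borel--Cantelli lemma, the one delicate point being the choice of truncation level for the partial quotients. First I would reduce to $\alpha$ in a fixed unit interval: for $k\ge 1$ the partial quotient $a_k(\gamma)$ depends only on $\{\gamma\}$, and since every $b_1^{j_1}\cdots b_s^{j_s}$ is a positive integer, if $\alpha$ is uniformly distributed on an interval $[n,n+1]$ with $n\in\ZZ$ then $\{\alpha\, b_1^{j_1}\cdots b_s^{j_s}\}$ is again uniformly distributed on $[0,1)$; hence for each fixed $\mathbf j=(j_1,\ldots,j_s)$ and each $k\ge 1$ the random variable $a_k(\alpha\, b_1^{j_1}\cdots b_s^{j_s})$ has exactly the (Lebesgue) distribution of $a_k(x)$, $x\in[0,1]$. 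Since $\RR$ is a countable union of such unit intervals, and since it suffices to treat each $\epsilon=1/\ell$, $\ell\in\NN$, separately (a countable union of null exceptional sets is null), it is enough to prove the stated bound for almost all $\alpha\in[0,1]$ with $\epsilon>0$ fixed.

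The only input from the metric theory of continued fractions that I would use is the classical bound $\Pr_{x\in[0,1]}(a_k(x)\ge t)\ll 1/t$, uniformly in $k\ge 1$ and $t\ge1$, which follows from the invariance of the Gauss measure under the Gauss map together with the mutual boundedness of the Gauss measure and Lebesgue measure on $[0,1]$. It gives $\mathbb E_{x\in[0,1]}[\min(a_k(x),M)]=\sum_{t=1}^{M}\Pr_{x\in[0,1]}(a_k(x)\ge t)\ll\log M$, again uniformly in $k$.

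Next I would pass to the sparse subsequence $L=2^m$: the quantity $S_L(\alpha)$ is nondecreasing in $L$ (all $a_k\ge 1>0$) while $L^{s+1+\epsilon}$ changes only by the bounded factor $2^{s+1+\epsilon}$ between consecutive powers of $2$, so it is enough to bound $S_{2^m}(\alpha)$ for all large $m$. Put $M=M_m:=(2^m)^{s+1}m^2$ and split every partial quotient occurring in $S_L$ (with $L=2^m$) as $a_k=\min(a_k,M)+\max(a_k-M,0)$. There are at most $(L+1)^sL\ll L^{s+1}$ pairs $(\mathbf j,k)$ with $\mathbf j\in\{0,\ldots,L\}^s$ and $1\le k\le L$, so a union bound gives
$$\Pr_{\alpha\in[0,1]}\Big(a_k(\alpha\, b_1^{j_1}\cdots b_s^{j_s})>M\ \text{ for some such }(\mathbf j,k)\Big)\ll \frac{L^{s+1}}{M}=\frac{1}{m^2},$$
which is summable in $m$, so for almost all $\alpha$ the ``tail'' contribution to $S_{2^m}$ vanishes once $m$ is large. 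For the truncated part, linearity of expectation and the moment estimate above yield
$$\mathbb E\Big[\sum_{\mathbf j\in\{0,\ldots,L\}^s}\ \sum_{k=1}^{L}\min\big(a_k(\alpha\, b_1^{j_1}\cdots b_s^{j_s}),M\big)\Big]\ll L^{s+1}\log M\ll L^{s+1}\log L,$$
so by Markov's inequality this truncated sum exceeds $L^{s+1+\epsilon}$ with probability $\ll (\log L)/L^{\epsilon}\ll m\,2^{-m\epsilon}$, again summable in $m$. Borel--Cantelli then shows that for almost all $\alpha\in[0,1]$ one has $S_{2^m}(\alpha)\le(2^m)^{s+1+\epsilon}$ for all sufficiently large $m$, and monotonicity in $L$ upgrades this to $S_L(\alpha)=O_{\alpha,\epsilon,b_1,\ldots,b_s,s}(L^{s+1+\epsilon})$ for every $L$. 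Intersecting over $\epsilon=1/\ell$ and over the unit intervals covering $\RR$ finishes the proof.

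The step I expect to be the \emph{main obstacle} is getting the truncation level $M$ right. The naive route --- applying the known one-dimensional bound $\sum_{k\le L}a_k(\gamma)\ll_{\gamma,\epsilon}L^{1+\epsilon}$ separately to each of the $\approx L^s$ numbers $\gamma=\alpha\,b_1^{j_1}\cdots b_s^{j_s}$ --- fails, because the implied constants depend on $\gamma$ and cannot be controlled as $\mathbf j$ ranges over $\{0,\ldots,L\}^s$. One is forced to treat all the scalings simultaneously, and then $M$ must be taken as large as $\approx L^{s+1}$ so that the tail union bound is summable, yet small enough ($\log M\ll L^{\epsilon}$) that the first-moment bound for the truncated sum remains useful; the remaining steps are routine.
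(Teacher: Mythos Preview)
Your argument is correct. The paper itself does not prove this lemma at all: its entire proof is a pointer to \cite[Lemma~2]{LarJOC}. Your approach---reduce to $\alpha\in[0,1]$ via the observation that multiplication by the positive integer $b_1^{j_1}\cdots b_s^{j_s}$ preserves the uniform law of the fractional part (and hence the law of each $a_k$), truncate the partial quotients at a level $M\asymp L^{s+1}(\log L)^2$ chosen so that both the tail union bound and the first-moment Markov bound are summable along $L=2^m$, and apply Borel--Cantelli followed by monotone interpolation in $L$---is exactly the standard machinery for metric results of this type, and is in spirit what Larcher's cited lemma does. The one thing worth making explicit when you write it up is that the constant in $\Pr_{x\in[0,1]}(a_k(x)\ge t)\ll 1/t$ is \emph{absolute} (uniform in $k$), which is what lets the union bound over the $\ll L^{s+1}$ pairs $(\mathbf j,k)$ go through; you have this implicitly in ``uniformly in $k\ge1$ and $t\ge1$''. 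Your diagnosis of the main obstacle---that the one-dimensional bound $\sum_{k\le L}a_k(\gamma)\ll_{\gamma,\epsilon}L^{1+\epsilon}$ cannot be applied to the $\approx L^s$ values of $\gamma$ separately because the implied constants are uncontrolled---is also on the mark.
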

\begin{proof}
See e.g. \cite[Lemma 2]{LarJOC}. 
\end{proof}

Finally we consider $\beta>1$ and set $\alpha:=\beta/(\beta+1)$. Then $1/2<\alpha<1$, $\lfloor n\alpha\rfloor=n-\lceil n\frac{1}{\beta +1}\rceil$ and hence 
$$\{\lfloor n\alpha\rfloor;n\in\NN\}=\{\lfloor n\beta\rfloor;n\in\NN\}\cup\NN_0.$$ 

Let $$B:=\left\{\alpha\in(1/2,1):D_N(\phi_{b_1}(\lfloor n\alpha\rfloor),\ldots,\phi_{b_s}(\lfloor n\alpha\rfloor))=O_{\alpha,\epsilon,b_1,\ldots,b_s}\left(\frac{(\log N)^{s+1+\epsilon}}{N}\right)\mbox{ for all }\epsilon>0\right\}.$$
From the above case we know $\lambda(B)=1/2$. Let $A:=\{\beta\in(1,\infty):\beta/(\beta+1)\in B\}$. Then $\lambda((1,\infty)\setminus A)=0$. 

Now let $\beta\in A$. Then $\alpha\in B$. 
Since 
$$\underbrace{(\phi_{b_1}(\lfloor n\alpha\rfloor),\ldots,\phi_{b_s}(\lfloor n\alpha\rfloor))_{n\geq 1}}_{D_N=O((\log N)^{s+1+\epsilon}/N)}=(\phi_{b_1}(\lfloor n\beta\rfloor),\ldots,\phi_{b_s}(\lfloor n\beta\rfloor))_{n\geq 1}\,\,\cup\,\, \underbrace{(\phi_{b_1}(n),\ldots,\phi_{b_s}(n))_{n\geq 0}}_{D_N=O((\log N)^s/N)}$$
we have 
$$D_N(\phi_{b_1}(\lfloor n\beta\rfloor),\ldots,\phi_{b_s}(\lfloor n\beta\rfloor))=O_{\beta,\epsilon,b_1,\ldots,b_s}\left(\frac{(\log N)^{s+1+\epsilon}}{N}\right)\mbox{ for all }\epsilon>0$$
and the proof is complete.

\section*{Acknowledgments}

The author would like to thank Gerhard Larcher for valuable comments and fruitful discussions.

Roswitha Hofer\\
Institute of Financial Mathematics and Applied Number Theory\\
Johannes Kepler University Linz\\
Altenbergerstr. 69\\
A-4040 Linz, AUSTRIA\\ 
roswitha.hofer@jku.at\\

\end{document}